\newtheorem{theorem}{Theorem}[section]
\theoremstyle{definition}
\newtheorem{rem}{Remark}[section]
\theoremstyle{definition}
\newtheorem{lemma}{Lemma}[section]
\theoremstyle{definition}
\begin{document}

\title[Proof of a Conjecture of Wiegold]{Proof of a Conjecture of Wiegold}

\subjclass[2010]{20D15}

\keywords{Group theory, Finite groups}

\author[Alexander Skutin]{Alexander Skutin}

\maketitle

\section{Introduction}

In this short note we confirm a conjecture of James Wiegold \cite[4.69]{K}. The breadth $b(x)$ of an element $x$ of a finite $p$-group $G$ is defined by the equation $|G:C_G(x)| = p^{b(x)}$, where $C_G(x)$ is the centralizer of $x$ in $G$. We prove the following:

\begin{theorem}

Let $G$ be a finite $p$-group and let $|G'|>p^{n(n-1)/2}$ for some non-negative integer $n$. Then the group $G$ can be generated by the elements of breadth at least $n$.

\end{theorem}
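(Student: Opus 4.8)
The plan is to argue by contradiction through the Frattini/Burnside correspondence. A set of elements generates the finite $p$-group $G$ if and only if its image spans $G/\Phi(G)$, i.e. if and only if the set lies in no maximal subgroup. So I would assume that the subgroup $N$ generated by all elements of breadth at least $n$ is proper, fix a maximal subgroup $M \ge N$ (necessarily normal of index $p$, with $G' \le \Phi(G) \le M$), and record the crucial consequence: every element of $G \setminus M$ has breadth at most $n-1$, that is $|C_G(x)| \ge |G|/p^{n-1}$ for all $x \notin M$. It then suffices to prove the contrapositive estimate $|G'| \le p^{n(n-1)/2}$, which contradicts the hypothesis.

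The conceptual heart is the class-two case, where the statement becomes linear algebra. If $G' \le Z(G)$ then the commutator induces an alternating $\mathbb{F}_p$-bilinear map $\beta \colon V \times V \to G'$ with $V = G/Z(G)$, the breadth of $x$ equals the rank of the slice $\beta(\bar x, -)$, and $G'$ is spanned by the image of $\beta$. If $Z(G) \not\le M$ then every element has breadth at most $n-1$ and we are in the uniform-rank instance of the lemma below; so I may assume $Z(G) \le M$, making $H = M/Z(G)$ a hyperplane of $V$, and the hypothesis reads: $\beta(v,-)$ has rank at most $n-1$ for every $v \notin H$. Since $n(n-1)/2 = \binom{n}{2}$ is exactly the extremal dimension $\dim \wedge^2(\mathbb{F}_p^{\,n})$ attained by a generic rank-$(n-1)$ alternating form, the task reduces to a sharp rank lemma: an alternating map all of whose slices off a hyperplane have rank $\le n-1$ has image of dimension at most $\binom{n}{2}$.

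To bring the general case to this one, I would induct on the nilpotency class, peeling off a central subgroup $Z_0 \le Z(G) \cap \gamma_c(G)$ of order $p$ at the top of the lower central series. The key monotonicity is that breadth does not increase in a quotient, because $C_{G/Z_0}(\bar x) \supseteq \overline{C_G(x)}$; hence an element of $\bar G = G/Z_0$ of breadth $\ge n$ lifts to one of breadth $\ge n$ in $G$, so the hypothesis "breadth $\ge n$ forces membership in $M$" descends to $\bar G$ with $\bar M = M/Z_0$. Applying the inductive bound to $\bar G$ controls $|G'/Z_0|$, and one adds the contribution of the single central step, the bookkeeping arranged so that the exponents telescope to $\binom{n}{2}$.

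The main obstacle, I expect, is twofold and lives entirely in the two estimates just isolated. First, the rank lemma must genuinely exploit the hyperplane hypothesis rather than a uniform bound: a vector $v \in H$ is a difference of two vectors outside $H$, which only yields rank $\le 2(n-1)$ on $H$, so a naive slice-by-slice count overshoots, and one needs a careful analysis of how the subspaces $\mathrm{Im}\,\beta(v,-)$ for $v \notin H$ overlap. Second, making the class-reduction tight — so that the central steps supply precisely the missing parts of $\binom{n}{2}$ and the factor of $p$ lost in a crude quotient is recovered — requires pinning down the extremal configuration and ruling out the boundary case $|G'| = p^{\binom{n}{2}+1}$. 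Everything else, namely the Frattini reduction and the breadth monotonicity, is routine.
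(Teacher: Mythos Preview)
Your plan has a genuine gap in the class-reduction step, and you essentially admit it yourself. Quotienting by a central $Z_0$ of order $p$ inside $\gamma_c(G)$ and applying the inductive hypothesis to $\bar G=G/Z_0$ with the \emph{same} $n$ yields only $|G'/Z_0|\le p^{\binom{n}{2}}$, hence $|G'|\le p^{\binom{n}{2}+1}$, and nothing in your outline recovers that lost factor of $p$; ``ruling out the boundary case'' is precisely the content of the theorem, not a finishing touch. Likewise, your ``rank lemma'' is not a reduction at all: the statement that an alternating $\mathbb{F}_p$-bilinear map whose slices off a hyperplane have rank $\le n-1$ must have image of dimension $\le\binom{n}{2}$ is exactly Theorem~1.1 restricted to class~$2$. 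You neither prove it nor suggest a mechanism, and you correctly note that the naive slice count overshoots. So as it stands the proposal reduces the theorem to two sub-problems each of which is essentially the original problem.

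The paper's argument avoids both difficulties by changing the induction scheme. It first strengthens the statement (Theorems~1.2 and~1.3): the elements of breadth $\ge n$ cannot be covered by \emph{two} proper subgroups $H_1,H_2$ (for $p=2$, one of index $\ge4$). The induction is on $|G|$, not on nilpotency class. Inside a well-chosen third maximal subgroup $C$ one introduces $Y=\{c\in C:[c,g]\in C'\ \forall g\in G\}$ and shows that the elements of $C$ with $b_C(c)\ge n-1$ lie in $(C\cap H_1)\cup Y$. The inductive hypothesis then applies to $C$ with $n$ replaced by $n-1$, giving $|C'|\le p^{\binom{n-1}{2}}$; an element $g\notin H_1\cup H_2\cup C$ has $b(g)\le n-1$, and Lemma~2.1 gives $\log_p|G'|\le b(g)+\log_p|C'|\le (n-1)+\binom{n-1}{2}=\binom{n}{2}$. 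The arithmetic closes exactly because the descent drops $n$ by one, which is why the two-subgroup strengthening is needed: after passing to $C$, the covering naturally involves two subgroups $(C\cap H_1)$ and $Y$, so the one-subgroup statement (your $M$) is not self-improving.
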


An overview of this problem can be found in \cite{W}. Also M.R.Vaughan-Lee in \cite{Br} proved that if in a $p$-group $G$ we have $|G'|>p^{n(n-1)/2}$, then there exists an element in $G$ of breadth at least $n$.

In this article we prove that in the case $p\not= 2$ a more general result is true:

\begin{theorem}

Let $p\not= 2$ be a prime number. Let $G$ be a finite $p$-group and let $|G'|>p^{n(n-1)/2}$ for some non-negative integer $n$. Then the set of elements of breadth at least $n$ cannot be covered by two proper subgroups in $G$.

\end{theorem}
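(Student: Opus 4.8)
The plan is to argue by contradiction and to reduce the statement to a quantitative bound on $|G'|$. Suppose the set $S$ of elements of breadth at least $n$ satisfies $S\subseteq H_1\cup H_2$ with $H_1,H_2$ proper. Enlarging each $H_i$ to a maximal subgroup, I may assume $H_1,H_2$ are maximal, hence normal of index $p$. They must be distinct: if $H_1=H_2$ then $\langle S\rangle\le H_1<G$, contradicting Theorem 1.1 (applicable since $|G'|>p^{n(n-1)/2}$), which says that $S$ generates $G$. Put $N=H_1\cap H_2$, a normal subgroup of index $p^2$ containing $\Phi(G)$, so that $G/N\cong C_p\times C_p$ has exactly $p+1$ maximal subgroups $H_1,\dots,H_{p+1}$. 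Writing $C:=G\setminus(H_1\cup H_2)$, one checks $C=\bigcup_{i=3}^{p+1}(H_i\setminus N)$, so the hypothesis becomes: every element of $C$ has breadth at most $n-1$. Note that $C$ is closed under conjugation (because $H_1\cup H_2$ is normal), and that for $p\ne 2$ its image in $G/\Phi(G)$ is the complement of two hyperplanes, whence $C$ generates $G$ and generates $G/G'$.

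The goal is now to deduce $|G'|\le p^{n(n-1)/2}$, contradicting the hypothesis. This is a covering analogue of the Vaughan-Lee inequality $|G'|\le p^{b(b+1)/2}$ (with $b$ the maximal breadth) underlying \cite{Br} and Theorem 1.1: there the bound comes from breadth $\le n-1$ holding throughout $G$, whereas here I only control $C$. The point is that breadth is exactly the right data: for $c\in C$ the set $\{[c,g]:g\in G\}=c^{-1}c^{G}$ has at most $p^{b(c)}\le p^{n-1}$ elements, and since $C$ is a conjugation-closed generating set one has $G'=\langle [c,c']:c,c'\in C\rangle$. Passing to the associated graded Lie ring $L=\bigoplus_i \gamma_i/\gamma_{i+1}$ of the lower central series, the breadth bound says that each $c\in C$ acts on $L$ by an adjoint map of rank at most $n-1$, and the images of $C$ generate $L_{\ge 2}$ over $L_1=G/G'$.

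I would then run the triangular count behind Vaughan-Lee's inequality on $L$, but drawing every test element from $C$. The estimates bound each $\dim_{\mathbb{F}_p}(\gamma_i/\gamma_{i+1})$ by comparing the adjoint images of suitably chosen elements and their combinations; carried out with vectors drawn from the image $\overline{C}$ of $C$ in $G/\Phi(G)$, they should yield successive contributions $n-1,n-2,\dots,1$, summing to $n(n-1)/2$. The whole argument hinges on $\overline{C}$ being rich enough to realise the vectors the count requires at each stage.

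This richness is exactly where the hypothesis $p\ne 2$ enters, and it is the main obstacle. For $p\ge 3$ the complement $\overline{C}$ of two hyperplanes is large: its image in $G/N$ consists of $(p-1)^2\ge 4$ directions, it contains a basis of $G/\Phi(G)$, and every vector of $G/\Phi(G)$ is a sum of two elements of $\overline{C}$ (since, coordinatising $G/N$ so that the two hyperplanes are the axes, one has $p-2\ge 1$ admissible values in each coordinate). The delicate step is that the triangular bound needs the rank condition not merely on a basis — which would only give the weak estimate $(n-1)\dim(G/G')$ — but on enough linear combinations to force the off-diagonal commutators into a common small subspace; I must verify that avoiding only two hyperplanes still leaves available all the combinations the count uses. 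For $p=2$, by contrast, $\overline{C}$ degenerates to the single nonzero vector of one extra maximal subgroup $H_3$, the triangular improvement collapses, and the statement itself can fail, which is precisely why the theorem is restricted to odd $p$.
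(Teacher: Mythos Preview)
Your proposal sets up the contradiction correctly but stops short of a proof at precisely the point you yourself flag as ``the delicate step.'' The Vaughan--Lee triangular count does not merely need a generating set of bounded breadth: at each stage it requires the breadth of a \emph{specific product} of previously chosen elements to be at most $n-1$, and those products can fall back into $H_1\cup H_2$, where no bound is available (for instance, if $c_1,c_2$ lie in the same third maximal subgroup over $N$, then $c_1c_2$ may land in $N\subseteq H_1\cap H_2$). Your remark that every vector of $G/\Phi(G)$ is a sum of two elements of $\overline C$ does not address this, since the count uses prescribed products, not arbitrary decompositions. Until the inequality is actually established with all test elements constrained to $G\setminus(H_1\cup H_2)$, what you have is a plan rather than a proof. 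A separate minor issue: invoking Theorem~1.1 to force $H_1\ne H_2$ is circular in the paper's logic, since Theorem~1.1 is deduced from Theorems~1.2 and~1.3; one should instead argue, as the paper does, that a non-cyclic $p$-group has at least two maximal subgroups.

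The paper's argument avoids the Lie ring entirely and is structurally different. It proceeds by induction on $|G|$: pick a third maximal subgroup $C$ containing $H_1\cap H_2$ (so here $C$ is a \emph{subgroup}, not your complement), set $Y=\{c\in C:[c,G]\subseteq C'\}$, and observe that $b_C(c)<b(c)$ whenever $c\notin Y$, so that elements of $C$ with $b_C(c)\ge n-1$ lie in $(C\cap H_1)\cup Y$. The induction hypothesis applied to $C$ with these two proper subgroups gives $\log_p|C'|\le\binom{n-1}{2}$; then Lemma~2.1, applied to any $g\notin H_1\cup H_2\cup C$ (such $g$ exists exactly because $p>2$, by Lemma~2.3), yields $\log_p|G'|\le b(g)+\log_p|C'|\le (n-1)+\binom{n-1}{2}=\binom{n}{2}$. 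The hypothesis $p\ne 2$ thus enters in one clean place, rather than through an unverified richness condition on the complement.
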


In the particular case $p = 2$ we also get a more general result:

\begin{theorem}

Let $G$ be a finite $2$-group and let $|G'|>2^{n(n-1)/2}$ for some non-negative integer $n$. Then the set of  elements of breadth at least $n$ cannot be covered by two proper subgroups in $G$, one of which has index at least $4$ in $G$.

\end{theorem}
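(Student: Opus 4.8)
The plan is to prove the contrapositive: assuming the set $S=\{x\in G:b(x)\ge n\}$ is contained in $H_1\cup H_2$ with $H_1,H_2$ proper and $[G:H_1]\ge 4$, I will derive $|G'|\le 2^{n(n-1)/2}$. The point of departure is Vaughan-Lee's inequality, which shows that the maximal breadth alone controls $|G'|$; here the hypothesis instead says only that every element \emph{outside} $H_1\cup H_2$ has breadth at most $n-1$, and the task is to convert this weaker information into the same bound. The reason the case $p=2$ is exceptional, and the reason for the threshold $4$, is transparent at the level of densities: two distinct maximal subgroups (index $2$) cover exactly a $3/4$-fraction of $G$, the densest possible cover by two proper subgroups, so this configuration cannot be excluded in general; requiring $[G:H_1]\ge 4$ forces the cover strictly below $3/4$ and is precisely what should leave room for a high-breadth element outside it.

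I would first normalise the data. Since $b(x)$ is invariant under conjugation and inversion and is unchanged on multiplying $x$ by a central element, $S$ is a normal, inverse-closed, $Z(G)$-invariant subset, and by Theorem~1.1 (generation by high-breadth elements) it generates $G$. Enlarging $H_2$ to a maximal subgroup $M\supseteq H_2$ (necessarily normal of index $2$) preserves the covering $S\subseteq H_1\cup M$, and $S\not\subseteq M$ because $\langle S\rangle=G$; hence $S\setminus M\subseteq H_1$ and $H_1\not\le M$, which gives $H_1M=G$ and $[M:H_1\cap M]=[G:H_1]\ge 4$. In other words, all high-breadth elements in the nontrivial coset of $M$ are trapped inside the small subgroup $H_1$.

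Next I would induct on $|G|$ by passing to $\bar G=G/Z$ for a suitable central subgroup $Z$ of order $p$ contained in $G'$. The covering hypothesis descends cleanly when $Z\le H_1\cap M$: an element of $\bar G$ of breadth $\ge n$ lifts to an element of breadth $\ge n$ in $G$, hence into $H_1\cup M$, so its image lies in $\bar H_1\cup\bar M$, and the two index conditions survive. The difficulty is that breadth is not functorial: it may drop in the quotient, and indeed it must be made to drop, since $|\bar G'|=|G'|/p$ and a naive induction therefore loses a factor of $p$. To recover it I would choose $Z$ inside $[G,\langle x\rangle]\cap Z(G)$ for an element $x$ of maximal breadth, which strictly decreases the maximal breadth in $\bar G$ and so buys back the missing power of $p$ through the Vaughan-Lee-type estimate; reconciling this choice of $Z$ with the constraint $Z\le H_1\cap M$ is the first genuine technical hurdle.

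The main obstacle, and the genuinely $2$-specific part, is the terminal case in which no further reduction is possible and $G$ is essentially controlled by $G/(H_1\cap M)$. Here one cannot appeal to Theorem~1.2, which is available only for odd $p$, so the bound must be established by hand, and this is where the hypothesis $[G:H_1]\ge 4$ must be spent: one has to produce an element of breadth $\ge n$ lying in one of the cosets of $H_1\cap M$ avoided by $H_1\cup M$, exploiting that $H_1$ is too small (index $\ge 4$) to absorb all the high-breadth elements of the coset $G\setminus M$. I expect this extraction of a high-breadth element in the avoided region to be the crux of the argument, with the surrounding induction serving mainly to reduce to it while keeping the two-subgroup covering, the index-$\ge 4$ condition, and the exact power of $p$ in the commutator bound simultaneously under control.
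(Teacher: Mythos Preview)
Your plan has two genuine gaps. First, invoking Theorem~1.1 to conclude $\langle S\rangle=G$ is circular: in this paper Theorem~1.1 is deduced from Theorems~1.2 and~1.3, so it is unavailable inside the proof of Theorem~1.3. Second, and more seriously, the quotient induction is aimed at the wrong parameter. The integer $n$ is a fixed threshold, not the maximal breadth; elements of $H_1\cup M$ may have breadth far exceeding $n$, so shaving one off the maximal breadth by a well-chosen central quotient does not bring you closer to the situation where every element has breadth $<n$. Vaughan-Lee's reduction buys a factor of $p$ against the bound $p^{m(m+1)/2}$ expressed in the maximal breadth $m$, but contributes nothing towards the target $p^{n(n-1)/2}$ unless already $m=n-1$, which is exactly what you cannot assume. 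Your ``terminal case'' is therefore not a residual configuration to be cleaned up at the end: it is the entire content of the theorem, and the sketch supplies no mechanism for it.

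The paper takes a different route. It first proves an auxiliary result (Theorem~2.1) carrying an extra parameter $k$: if the elements of breadth $\ge n$ are covered by two proper subgroups \emph{and} $G$ is generated by elements of breadth $\le k$ with $n\le k+1$, then $\log_p|G'|\le \tfrac{(n-1)(n-2)}{2}+k$. This is established by induction on $|G|$, descending not to a central quotient but to an index-$p$ subgroup $C$ with $C\cap H_1=C\cap H_2=H_1\cap H_2$, together with the subgroup $Y=\{c\in C:[c,g]\in C'\text{ for all }g\in G\}$, which isolates those elements whose breadth drops on restriction to $C$; Lemma~2.1 then transfers the bound on $|C'|$ back to $|G'|$. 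Theorem~1.3 follows because the hypothesis that one covering subgroup has index $\ge 4$ forces, via Lemma~2.3, the complement $G\setminus(H_1\cup H_2)$ to generate $G$, so one may take $k=n-1$ in Theorem~2.1 and read off $\log_2|G'|\le\tfrac{n(n-1)}{2}$.
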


So Theorem 1.1 is a consequence of the two Theorems 1.2 and 1.3.

\section{Proofs of Theorems 1.2 and 1.3}

The breadth $b_H(g)$ of an element $g$ of a finite $p$-group $G$ with respect to a subgroup $H\subseteq G$ is defined by the equation $|H:C_H(g)| = p^{b_H(g)}$, where $C_H(g) = \{h\in H| hg = gh\}$ is the centralizer of $g$ in $H$. By definition, $b(g) = b_G(g)$.

First we formulate Lemmas 2.1, 2.2, 2.3, which will be useful in the proofs of the Theorems 1.2 and 1.3.

\begin{lemma}

Let $G$ be a finite $p$-group and let $C$ be a finite subgroup of index $p$. Then for any element $g$ from the set $G\setminus C$ we get $\log_p |G'|\leq b(g) + \log_p|C'|$.

\end{lemma}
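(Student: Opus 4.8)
The plan is to reduce the claimed inequality $\log_p|G'|\le b(g)+\log_p|C'|$ to the multiplicative statement $|G':C'|\le p^{b(g)}$. Since $C$ has index $p$ in the $p$-group $G$ it is normal, hence its derived subgroup $C'$ is characteristic in $C$ and therefore normal in $G$, and of course $C'\subseteq G'$. Passing to $\bar G=G/C'$ I have $\bar G'=G'/C'$, while $\bar C=C/C'$ is an \emph{abelian} normal subgroup of index $p$ with $\bar G=\langle\bar C,\bar g\rangle$, where $\bar g$ is the image of $g$. So it suffices to bound $|\bar G'|$ by $p^{b(g)}$.

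For this I would exploit the structure of a group with an abelian normal subgroup of prime index. Because $\bar C$ is abelian and normal with $\bar G/\bar C$ cyclic generated by $\bar g\bar C$, the map $\phi\colon\bar C\to\bar C$, $\phi(a)=a^{-1}a^{\bar g}=[a,\bar g]$, is a group homomorphism (here the abelianness of $\bar C$ is exactly what makes $\phi$ multiplicative). Its image $[\bar C,\bar g]$ is a subgroup of $\bar C$; it is normalised by $\bar g$ and centralised by $\bar C$, hence normal in $\bar G$, and modulo it $\bar g$ becomes central, so that $\bar G'=\operatorname{Im}\phi$. Consequently
\[
|\bar G'|=|\operatorname{Im}\phi|=|\bar C:\ker\phi|=|\bar C:C_{\bar C}(\bar g)|.
\]

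It remains to connect this index with the breadth $b(g)$. Consider the commutator map $f\colon C\to G'$, $f(c)=[c,g]=c^{-1}c^{g}$; a direct computation shows $f(c_1)=f(c_2)$ exactly when $c_2c_1^{-1}\in C_G(g)$, so the fibres of $f$ are the cosets of $C_C(g)=C\cap C_G(g)$ and $|\operatorname{Im}f|=|C:C_C(g)|$. Now $g\in C_G(g)\setminus C$, so $C_G(g)$ is not contained in the index-$p$ subgroup $C$, whence $C_G(g)C=G$; the product formula then gives $|C:C_C(g)|=|G:C_G(g)|=p^{b(g)}$. Finally, reducing $f$ modulo $C'$ yields precisely $\phi$ on $\bar C$, so $\bar G'=\operatorname{Im}\phi$ is the image of $\operatorname{Im}f$ under the projection $G'\to G'/C'$; hence $|\bar G'|\le|\operatorname{Im}f|=p^{b(g)}$, which is the desired bound.

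The part needing the most care is the structural claim $\bar G'=[\bar C,\bar g]$: one must verify that the set of commutators $[a,\bar g]$ already forms the full derived subgroup, which is where the hypothesis of an abelian normal subgroup of prime index is essential and where the reduction modulo $C'$ pays off. The remaining ingredients, namely the product-formula identity $|C:C_C(g)|=p^{b(g)}$ and the fibre count for $f$, are routine once this structural fact is in place.
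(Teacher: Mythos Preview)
Your proof is correct and follows essentially the same approach as the paper. Both arguments show that $G'/C'$ coincides with the set of commutators $\{[c,g]C':c\in C\}$ (the paper phrases this as $G'=XC'$ with $X=\{[g,c]:c\in C\}$, you phrase it as $\bar G'=\operatorname{Im}\phi$ after passing to $G/C'$), using the same commutator identity modulo $C'$ together with normality and the observation that the quotient is abelian; you add the product-formula computation $|C:C_C(g)|=|G:C_G(g)|=p^{b(g)}$, which is a slight sharpening of the paper's bare bound $|X|\le p^{b(g)}$, and your intermediate display $|\bar G'|=|\bar C:C_{\bar C}(\bar g)|$ is correct but not actually needed once you invoke the reduction of $f$ modulo $C'$.
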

\newpage
\begin{proof}

The cardinality of the set $X = \{[g, c]| c\in C\}$ is not bigger than $p^{b(g)}$. So it is enough to prove that $G' = XC'$. This follows from the following properties of the set $XC'$ :

\begin{enumerate}
    \item $XC'$ is a subgroup in $G$ : $[g, c_1][g, c_2]C' = [g, c_1c_2]C'$;
    \item $XC'$ is a normal subgroup;
    \item Group $G/XC'$ is abelian.
\end{enumerate}
These facts imply Lemma 2.1.

\end{proof}

The next two lemmas are the well-known facts in theory of $p$-groups, so we state them without proof.

\begin{lemma}

Let $G$ be a finite $p$-group and let $|G:Z(G)|\leq p^2$. Then $\log_p|G'|\leq 1$.

\end{lemma}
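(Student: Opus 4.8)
The plan is to split according to the order of the central quotient $G/Z(G)$, which by hypothesis has order $1$, $p$, or $p^2$. If $|G:Z(G)| \in \{1, p\}$ then $G/Z(G)$ is cyclic, and by the classical fact that a group with cyclic central quotient is abelian we obtain $G' = 1$, so $\log_p|G'| = 0 \le 1$. Thus the only case needing work is $|G:Z(G)| = p^2$: here $G/Z(G)$ has order $p^2$ and is therefore abelian, and it cannot be cyclic (that would again force $G$ abelian), so $G/Z(G) \cong C_p \times C_p$.

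In this case I would choose $x, y \in G$ whose images generate $G/Z(G)$, so that $G = \langle x, y, Z(G)\rangle$. Since $G/Z(G)$ is abelian we have $G' \subseteq Z(G)$, so in particular $[x,y]$ is central. The key reduction is that every commutator of $G$ is then a power of $[x,y]$: writing two arbitrary elements as $g = x^a y^b z$ and $h = x^c y^d w$ with $z, w \in Z(G)$, the central factors drop out of $[g,h]$, and because the commutators involved lie in the centre the commutator map becomes bilinear, yielding $[g,h] = [x,y]^{ad - bc}$. Hence $G' = \langle [x,y]\rangle$ is cyclic, and the whole problem reduces to bounding the order of the single element $[x,y]$.

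This last bound is the crux of the argument. Since $G/Z(G) \cong C_p \times C_p$ we have $x^p \in Z(G)$, whence $[x^p, y] = 1$; on the other hand bilinearity of the central-valued commutator gives $[x^p, y] = [x,y]^p$. Comparing the two forces $[x,y]^p = 1$, so $|G'| = |\langle [x,y]\rangle| \le p$ and $\log_p|G'| \le 1$, as required. The only point that needs careful checking is the bilinearity step, which rests on the standard identities $[ab, c] = [a,c][b,c]$ and $[a, bc] = [a,b][a,c]$, valid precisely because the relevant commutators are central.
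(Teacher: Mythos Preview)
Your argument is correct and is the standard proof of this classical fact. The paper itself does not give a proof of this lemma: it is stated as one of ``well-known facts in theory of $p$-groups'' and left unproved, so there is no approach to compare against. Your case split on $|G:Z(G)|$, the reduction to $G/Z(G)\cong C_p\times C_p$, the observation that $G'\subseteq Z(G)$ makes the commutator map bilinear so that $G'=\langle [x,y]\rangle$, and the conclusion $[x,y]^p=[x^p,y]=1$ from $x^p\in Z(G)$ are all valid and constitute precisely the usual justification of this result.
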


\begin{lemma}

Let $G$ be a finite $p$-group and let $G = H_1\cup H_2\cup H_3$ for some three proper subgroups of $G$. Then $p = 2$ and $H_i$ has index $2$ in $G$ for $i=1, 2, 3$.

\end{lemma}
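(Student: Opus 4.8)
The plan is to first reduce to an irredundant cover and then run a coset-counting argument on one of the three subgroups. I would begin by recording the elementary fact that a group is never the union of two proper subgroups: if $G=A\cup B$ with $A,B$ proper, choose $a\in A\setminus B$ and $b\in B\setminus A$; then $ab$ cannot lie in $A$ (else $b=a^{-1}(ab)\in A$) nor in $B$ (else $a=(ab)b^{-1}\in B$), a contradiction. Consequently, in the given cover $G=H_1\cup H_2\cup H_3$ none of the three subgroups may be omitted, so each $H_i\not\subseteq H_j\cup H_k$; in particular each pairwise intersection $H_i\cap H_j$ is a proper subgroup of both $H_i$ and $H_j$.

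Next I would analyse the left cosets of $H_1$. Since $H_1\neq G$, fix a nontrivial coset $gH_1$; as $gH_1\subseteq G\setminus H_1\subseteq H_2\cup H_3$, it is covered by $H_2$ and $H_3$. A short check shows that $gH_1\cap H_2$ is either empty or a full coset of $H_1\cap H_2$, and likewise for $H_3$, so $|gH_1\cap H_2|\le|H_1\cap H_2|$ and $|gH_1\cap H_3|\le|H_1\cap H_3|$. Because $H_1\cap H_2$ and $H_1\cap H_3$ are proper in $H_1$, each has index at least $2$, whence $|H_1\cap H_2|,|H_1\cap H_3|\le|H_1|/2$. Inclusion–exclusion applied to $gH_1=(gH_1\cap H_2)\cup(gH_1\cap H_3)$ then gives $|H_1|\le|H_1\cap H_2|+|H_1\cap H_3|\le|H_1|$, forcing equality everywhere: $[H_1:H_1\cap H_2]=[H_1:H_1\cap H_3]=2$, every nontrivial coset of $H_1$ meets both $H_2$ and $H_3$, and $gH_1\cap H_2\cap H_3=\varnothing$. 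By symmetry the same conclusions hold under any permutation of $H_1,H_2,H_3$; in particular $[H_2:H_1\cap H_2]=2$.

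Finally I would count how many cosets of $H_1$ can meet $H_2$. The cosets $hH_1$ with $h\in H_2$ number exactly $[H_2:H_1\cap H_2]=2$, one of which is $H_1$ itself; thus $H_2$ meets precisely one nontrivial coset of $H_1$. But the previous paragraph shows that every nontrivial coset of $H_1$ meets $H_2$, so there can be only one such coset, giving $[G:H_1]=2$. Symmetry yields $[G:H_2]=[G:H_3]=2$, and since an index-$2$ subgroup of a $p$-group forces the prime to be $2$, we conclude $p=2$, as required. The main obstacle is the bookkeeping in the middle step: one must verify that the intersection of a coset with $H_2$ (resp. $H_3$) is a genuine coset of the corresponding intersection subgroup and that all the displayed inequalities are simultaneously tight, since it is exactly this rigidity — each nontrivial coset splitting evenly and disjointly between $H_2$ and $H_3$ — that powers the final counting.
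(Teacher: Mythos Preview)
Your argument is correct. The paper, however, does not supply a proof of this lemma at all: it explicitly says that Lemmas~2.2 and~2.3 ``are the well-known facts in theory of $p$-groups, so we state them without proof.'' Hence there is nothing in the paper to compare your approach against.

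A brief comment on the content of your proof: the coset-counting you carry out is essentially the classical argument (going back to Scorza and B.~H.~Neumann) showing that \emph{any} group covered by three proper subgroups must have each of them of index~$2$, with common intersection of index~$4$. You use the $p$-group hypothesis only in the last sentence, to deduce $p=2$ from the existence of an index-$2$ subgroup; everything before that works verbatim for an arbitrary finite group. The bookkeeping you flag as the ``main obstacle'' is handled correctly: the key point is that $gH_1\cap H_j$ is either empty or a single left coset of $H_1\cap H_j$, and the chain
\[
|H_1|=|gH_1|\le |gH_1\cap H_2|+|gH_1\cap H_3|\le |H_1\cap H_2|+|H_1\cap H_3|\le \tfrac{|H_1|}{2}+\tfrac{|H_1|}{2}=|H_1|
\]
forces all three inequalities to be equalities simultaneously, yielding exactly the rigidity you need for the final counting step.
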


\subsection{Proof of Theorem 1.2}

Assume the converse. Let the proper subgroups $H_1$ and $H_2$ cover all the elements of breadth at least $n$. We will prove that then $|G'|\leq p^{n(n-1)/2}$. The proof is by induction on $|G|$. We can assume that $|G : H_i| = p$ (because every proper subgroup is contained in a subgroup of index $p$) and $H_1\not= H_2$ (because in every non-cyclic $p$-group, there are at least two different maximal proper subgroups). Consider any subgroup $C$ of $G$ such that $C$ has index $p$ in $G$ and $C\cap H_1 = C\cap H_2 = H_1\cap H_2$. Notice that the set $\{c\in C| b_C(c) = b(c)\}$ is contained in the subgroup $Y = \{c\in C| [c, g]\in C', \forall g\in G\}$. This is because if $b_C(c) = b(c)$, then for any $g\in G$ there exists $c'\in C$ such that $[c, g] = [c, c']$. Consider the case when $Y = C$. In this case the central subgroup $C/C'$ has a prime index in $G/C'$, so $G/C'$ is abelian and $G' = C'$. The rest follows from the induction hypothesis : the group $C$ has smaller order and all its elements of breadth at least $n$ are contained in the subgroup $C\cap H_1 = C\cap H_2$ (because $b_C(c)\leq b(c)$). Now consider the case $Y\not= C$. Notice that the set $\{c\in C| b_C(c)\geq n-1\}$ is contained in $(C\cap H_1)\cup Y$ (because $b_C(c) < b(c)$, when $c\notin Y$). Apply the induction hypothesis to the group $C$ and its proper subgroups $C\cap H_1$ and $Y$. We conclude that $\log_p|C'|\leq\frac{(n-2)(n-1)}{2}$. Consider any element $g$ not lying in $H_1\cup H_2\cup C$ (this is possible because of Lemma 2.3 and $p>2$). Its breadth is less than $n$ (because $g\notin H_1\cup H_2$), so from Lemma 2.1 we get that $\log_p|G'|\leq b(g) + \log_p|C'|\leq\frac{n(n-1)}{2}$. $\Box$

\begin{theorem}

Let $G$ be a finite $p$-group. Let given that for some integers $n\leq k + 1$

\begin{enumerate}

\item The set of all elements of the breadth at least $n$ can be covered by two proper subgroups of $G$.

\item The set of elements of the breadth at most $k$ generates $G$.

\end{enumerate}

Then $\log_p|G'|\leq\frac{(n-1)(n-2)}{2} + k$.

\end{theorem}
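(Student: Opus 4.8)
The plan is to imitate the induction on $|G|$ used for Theorem 1.2, letting hypothesis (1) drive the covering argument and letting hypothesis (2) supply the element that Lemma 2.3 together with the condition $p>2$ provided there. First I would dispose of the case $p\neq2$ outright: the condition $n\leq k+1$ means $k\geq n-1$, so hypothesis (1) alone lets Theorem 1.2 apply and gives $\log_p|G'|\leq\frac{n(n-1)}{2}=\frac{(n-1)(n-2)}{2}+(n-1)\leq\frac{(n-1)(n-2)}{2}+k$. Hence all the genuine content is the case $p=2$, where Lemma 2.3 allows $G=H_1\cup H_2\cup C$ and the element used in Theorem 1.2 need not exist.

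For $p=2$ I would induct on $|G|$. As before, enlarge $H_1,H_2$ to maximal subgroups, which only strengthens (1) and leaves (2) untouched. If $H_1=H_2$ one is in the one-subgroup situation; otherwise choose the third maximal subgroup $C$ through $H_1\cap H_2$, so that $C\cap H_1=C\cap H_2=H_1\cap H_2$, and set $Y=\{c\in C\mid [c,g]\in C'\ \forall g\in G\}$. The case $Y=C$ is the easy one: then $[C,G]\subseteq C'$, so $G/C'$ is abelian, $G'=C'$, and $\{c\in C\mid b_C(c)\geq n\}\subseteq C\cap H_1$; applying the induction hypothesis to $C$ at the same parameters $(n,k)$ finishes it, the generation hypothesis for $C$ at threshold $k$ following from a Reidemeister--Schreier generating set whose members are conjugates and $p$-th powers of breadth-$\leq k$ generators of $G$.

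In the remaining case $Y\neq C$ I would use (2) to pick a generator $g$ of breadth at most $k$ lying outside the proper subgroup $C$, and apply Lemma 2.1 to get $\log_p|G'|\leq b(g)+\log_p|C'|\leq k+\log_p|C'|$; it then suffices to prove $\log_p|C'|\leq\frac{(n-1)(n-2)}{2}$. Here the inclusion $\{c\in C\mid b_C(c)\geq n-1\}\subseteq(C\cap H_1)\cup Y$ holds, since $c\notin Y$ forces $b_C(c)<b(c)$, so $b_C(c)\geq n-1$ pushes $b(c)\geq n$ and hence $c\in(H_1\cup H_2)\cap C=H_1\cap H_2=C\cap H_1$ by (1). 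This is exactly hypothesis (1) for $C$ at parameter $n-1$, and since $\frac{(n-1)(n-2)}{2}=\frac{(n'-1)(n'-2)}{2}+k'$ at $(n',k')=(n-1,n-2)$, I would apply the induction hypothesis to $C$ with these parameters.

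The main obstacle is verifying hypothesis (2) for $C$ at the reduced threshold $k'=n-2$, that is, that the elements of $C$ of breadth at most $n-2$ with respect to $C$ generate $C$; the budget forces this drop, since retaining the threshold $k$ would push the bound past $\frac{(n-1)(n-2)}{2}$. The leverage I would use is the relation that for $c\in C$ one has $b_C(c)\in\{b(c),b(c)-1\}$, the lower value occurring exactly when $C_G(c)\subseteq C$, so the problem is to generate $C$ from elements of small $G$-breadth whose centralizers lie in $C$. I expect this to be the delicate point. I would attack it through the failure configuration: if the breadth-$\leq(n-2)$ elements of $C$ all lay in one maximal subgroup $M<C$, then $C=M\cup(C\cap H_1)\cup Y$, so Lemma 2.3 forces all three to have index $2$ in $C$; analysing this rigid configuration together with the generator $g\notin C$ furnished by (2) should either yield a contradiction or permit a further reduction of $|G|$, re-establishing the generation statement and closing the induction.
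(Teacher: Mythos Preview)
Your overall plan is the same induction as the paper's, and your shortcut for $p\neq 2$ via Theorem~1.2 is a legitimate simplification. The $p=2$ argument, however, has two gaps.

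In the case $Y=C$, the Reidemeister--Schreier justification of hypothesis (2) for $C$ does not work: Schreier generators are products of the original generators, and breadth is not controlled under products, so you cannot bound $b_C$ of the resulting elements. The paper's argument here is much simpler and avoids this entirely: every $c\in C\setminus H_1$ lies outside $H_1\cup H_2$ (since $C\cap H_1=C\cap H_2$), so $b_C(c)\leq b(c)\leq n-1\leq k$, and the complement of a maximal subgroup always generates $C$. This is easily patched.

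The real gap is in the case $Y\neq C$. You correctly reduce the problem to showing that the elements of $C$ with $b_C\leq n-2$ generate $C$, and you observe that failure forces $C=(C\cap H_1)\cup Y\cup M$ with all three of index $2$, hence $|C:Y|=2$. But you do not resolve this configuration; the hoped-for ``contradiction or further reduction'' is never supplied. The paper handles exactly this dichotomy by splitting on $|C:Y|$. When $|C:Y|\geq p^{2}$, Lemma~2.3 forces $C\setminus(H_1\cup Y)$ to generate $C$, and since those elements satisfy $b_C\leq n-2$ the induction at parameters $(n-1,n-2)$ runs just as you intended. When $|C:Y|=p$, generation at level $n-2$ genuinely can fail, and the missing idea is to apply Lemma~2.2 to $G/C'$: the image of $Y$ is central of index $p^2$, so $\log_p|G'|\leq\log_p|C'|+1$. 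One then applies the induction to $C$ at the looser parameters $(n-1,n-1)$, for which generation is supplied by $C\setminus H_1$ as above, obtaining $\log_p|G'|\leq\frac{(n-1)(n-2)}{2}+2$; this suffices once $k\geq 2$, and a short separate argument (again using Lemma~2.2, together with the element $g\notin C$ of breadth $\leq k$ that you mention) disposes of $k\leq 1$. So the ingredient you are missing is not a further descent in $|G|$ but the use of Lemma~2.2 on $G/C'$.
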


\begin{proof}

The proof is by induction on $|G|$. Let the subgroups $H_1$ and $H_2$ cover all the elements of the breadth at least $n$. We can assume that $|G : H_i| = p$ (because any proper subgroup is contained in the subgroup of index $p$) and $H_1\not= H_2$ (because in every non-cyclic $p$-group, there are at least two different maximal proper subgroups). Consider any subgroup $C$ of $G$ such that $C$ has index $p$ in $G$ and $C\cap H_1 = C\cap H_2 = H_1\cap H_2$. Notice that the set $\{c\in C| b_C(c) = b(c)\}$ is contained in the subgroup $Y = \{c\in C| [c, g]\in C', \forall g\in G\}$. It is because if $b_C(c) = b(c)$, then for any $g\in G$ there exists $c'\in C$ such that $[c, g] = [c, c']$. So we can conclude that the set $\{c\in C| b_C(c)\geq n-1\}$ is contained in $(C\cap H_1)\cup Y$ (because $b_C(c) < b(c)$ if $c\notin Y$). Consider the case $Y = C$. In this case the central subgroup $C/C'$ has a prime index in $G/C'$, so $G/C'$ is abelian and $G' = C'$. The rest follows from the induction hypothesis : group $C$ has smaller order and all its elements of the breadth at least $n$ are contained in the subgroup $C\cap H_1 = H_1\cap H_2$ (because $b_C(c)\leq b(c)$), also the set $C\setminus H_1$ is contained in the set $\{c\in C| b_C(c)\leq k\}$ (because $b_C(c)\leq b(c)\leq n-1\leq k$ for $c\in C\setminus H_1$) and so $C$ is generated by the elements of the breadth at most $k$ in subgroup $C$.

So we can assume that $Y$ is a proper subgroup of $C$. Consider the case $|C:Y| = p$ and consider a homomorphism $\pi : G\to G/C'$. It is clear that $\pi(Y)$ is the central subgroup of $\pi(G)$ and $|\pi(G):\pi(Y)|\leq p^2$. Apply Lemma 2.2 to $\pi(G)$, so we conclude that $\log_p|\pi(G)'|\leq 1$ and $\log_p|G'|\leq\log_p|C'|+1$. Apply the induction hypothesis to the group $C$ and to its two proper subgroups $H_1\cap H_2$, $Y$ so we get $\log_p|C'|\leq\frac{(n-3)(n-2)}{2} + n - 1$ (every element from the set $C\setminus H_1$ is of the breadth at most $n-1$ in $C$ and this set generates $C$, also for any element $g$ from the set $C\setminus (H_1\cup Y)$, we have $b_C(g)\leq n-2$). So if $k\geq 2$ we get $\log_p|G'|\leq\log_p|C'| + 1 \leq \frac{(n-3)(n-2)}{2}+n=\frac{(n-2)(n-1)}{2}+2\leq\frac{(n-2)(n-1)}{2}+k$ and the induction step is clear. In the case $k\leq 1$, we get $n=2$ (the case $n\leq 1$ is trivial) and the set $C\setminus (H_1\cup Y)$ is contained in the center of $C$ (because $b_C(g)\leq b(g)-1\leq 0$ for $g\in C\setminus (H_1\cup Y)$). It is clear that the set $C\setminus (H_1\cup Y)$ generates the central subgroup of index at most $p$ in $C$, so $C$ is abelian. From the conditions of Theorem 2.1 there exists an element $g\notin C$ such that $b(g)\leq k\leq 1$. So the group $C_G(g)$ has index at most $p$ in $G$. Also the subgroup $C_G(g)\cap C$ has index at most $p^2$ in $G$ and is central in $G$. Apply Lemma 2.2 to the group $G$, so we conclude that $\log_p|G'|\leq 1\leq\frac{(n-1)(n-2)}{2} + k$.

Eventually, we can assume that $|C : Y|\geq p^2$. So the set $C\setminus(H_1\cup Y)$ generates $C$, because otherwise if it generates proper subgroup $H$ of $C$, then $C = (H_1\cap C)\cup Y\cup H$ and from Lemma 2.3 we conclude that $|C:Y|\leq 2$, contradicting our assumption that $|C:Y|\geq p^2$. Also the set $C\setminus(H_1\cup Y)$ is contained in the set $\{g\in C| b_C(g)\leq n-2\}$, so $C$ is generated by the elements of the breadth at most $n-2$. And we can apply the induction hypothesis to $C$ and its two subgroups $H_1\cap H_2$, $Y$ and conclude that $\log_p|C'|\leq\frac{(n-3)(n-2)}{2}+n-2 = \frac{(n-2)(n-1)}{2}$. Also from the conditions of the Theorem 2.1 there exists an element $a\notin C$ such that $b(a)\leq k$. Apply Lemma 2.1 to $C$ and $a$, so $\log_p|G'|\leq b(a) + \log_p|C'|\leq\frac{(n-2)(n-1)}{2} + k$.

\end{proof}
\newpage
\subsection{Proof of Theorem 1.3}

Theorem 1.3 is a consequence of Theorem 2.1. Assume the converse. Let the proper subgroups $H_1$ and $H_2$ cover all the elements of breadth at least $n$ and $|G:H_2|\geq 4$. We will prove that then $|G'|\leq 2^{n(n-1)/2}$. The set $G\setminus(H_1\cup H_2)$ generates $G$, because otherwise if it generates proper subgroup $H$ of $G$, then $G = H_1\cap H_2\cup H$ and from Lemma 2.3 we conclude that $|G:H_2| \leq 2$, contradicting our assumption that $|G:H_2| \geq 4$. Now, notice that all the elements from $G\setminus(H_1\cup H_2)$ have breadth at most $n-1$ in $G$. So denote $k = n-1$ and apply Theorem 2.1 to $G$, $H_1$, $H_2$ and $n\le k+1$. We conclude that $\log_2|G'|\leq \frac{(n-1)(n-2)}{2}+k=\frac{n(n-1)}{2}$.

\begin{rem}

In fact, Theorem 1.2 is also a consequence of Theorem 2.1.

\end{rem}

The following Theorems 2.2, 2.3 are generalizations of Theorems 1.2 and 1.3, respectively. Their proofs are similar to the proofs of Theorems 1.2 and 1.3.

\begin{theorem}

Let $p$ be a prime number. Let $G$ be a finite $p$-group and let $|G'|>p^{n(n-1)/2}$ for some non-negative integer $n$. Then the set of elements of breadth at least $n$ cannot be covered by $p-1$ proper subgroups in $G$.

\end{theorem}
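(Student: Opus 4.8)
The plan is to mirror the proof of Theorem 1.2, replacing the two covering subgroups by the $p-1$ subgroups $H_1,\dots,H_{p-1}$, and to induct on $|G|$. Assume for contradiction that the elements of breadth at least $n$ are covered by proper subgroups $H_1,\dots,H_{p-1}$; I will deduce $\log_p|G'|\le n(n-1)/2$. As before, one may assume each $H_i$ is maximal (of index $p$) and that the $H_i$ are pairwise distinct (if fewer of them are distinct, fewer subgroups cover and the argument only gets easier); and if $G/\Phi(G)$ has order at most $p$ then $G$ is cyclic and $G'=1$, so I may assume $\dim_{\mathbb F_p}G/\Phi(G)\ge 2$. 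The case $p=2$, where $p-1=1$, asserts that the elements of breadth at least $n$ generate $G$, which already follows from Theorem 1.3, so I assume $p\ge 3$.

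The one genuinely new ingredient is the choice of the auxiliary index-$p$ subgroup $C$. Writing $H_i=\ker\phi_i$ for functionals $\phi_i$ on $V=G/\Phi(G)$, I would take $C=\ker\psi$ with $\psi$ lying in the plane $\langle\phi_1,\phi_2\rangle$ but proportional to none of $\phi_1,\dots,\phi_{p-1}$; since this plane contains $p+1$ directions while at most $p-1$ of them are spanned by some $\phi_l$, at least two directions remain and such a $\psi$ exists because $p\ge 3$. This forces $C\neq H_i$ for every $i$ and, crucially, $C\cap H_1=C\cap H_2$, so that the subgroups $C\cap H_i$ comprise at most $p-2$ distinct ones. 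As in Theorem 1.2, set $Y=\{c\in C\mid [c,g]\in C'\ \forall g\in G\}$; every $c\in C$ with $b_C(c)=b(c)$ lies in $Y$, hence every $c\in C$ with $b_C(c)\ge n-1$ lies in $\big(\bigcup_i(C\cap H_i)\big)\cup Y$.

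If $Y=C$, then $C/C'$ is central of prime index in $G/C'$, so $G/C'$ is abelian and $G'=C'$; the elements of breadth at least $n$ in $C$ lie in $\bigcup_i(C\cap H_i)$, a union of at most $p-2\le p-1$ proper subgroups of $C$, and the induction hypothesis applied to $C$ gives $\log_p|G'|=\log_p|C'|\le n(n-1)/2$. If $Y\neq C$, then the elements of breadth at least $n-1$ in $C$ are covered by the at most $(p-2)+1=p-1$ proper subgroups consisting of the distinct $C\cap H_i$ together with $Y$, so the induction hypothesis for $C$ with parameter $n-1$ yields $\log_p|C'|\le (n-1)(n-2)/2$. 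Finally I would choose $g\notin H_1\cup\dots\cup H_{p-1}\cup C$, possible because a finite $p$-group is never the union of $p$ of its proper subgroups; then $b(g)\le n-1$, and Lemma 2.1 gives $\log_p|G'|\le b(g)+\log_p|C'|\le (n-1)+(n-1)(n-2)/2=n(n-1)/2$, the desired contradiction.

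The main obstacle is the bookkeeping of the number of covering subgroups when descending to $C$: naively the high-breadth elements of $C$ are covered by the $p-1$ sets $C\cap H_i$ together with $Y$, i.e.\ by $p$ subgroups, which is one too many for the induction hypothesis. The device that saves exactly one subgroup is the choice of $\psi$ forcing the coincidence $C\cap H_1=C\cap H_2$, and for $p=3$ this reproduces the condition $C\cap H_1=C\cap H_2=H_1\cap H_2$ used in Theorem 1.2. The only external input required is the classical fact, generalising Lemma 2.3, that a finite $p$-group cannot be covered by $p$ proper subgroups; this is what guarantees the existence of the element $g$ outside the $p$ subgroups $H_1,\dots,H_{p-1},C$ in the last step.
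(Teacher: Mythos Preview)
Your proposal is correct and is precisely the natural generalization of the paper's proof of Theorem~1.2 that the paper itself points to (``Their proofs are similar to the proofs of Theorems~1.2 and~1.3''). The two adjustments you identify---choosing $C=\ker\psi$ with $\psi\in\langle\phi_1,\phi_2\rangle$ avoiding all $\phi_i$ so that $C\cap H_1=C\cap H_2$ and hence only $p-1$ proper subgroups are needed in the inductive step, and replacing Lemma~2.3 by the elementary counting fact that a finite $p$-group is never the union of $p$ proper subgroups---are exactly the modifications required, and your handling of the degenerate case $p=2$ via Theorem~1.3 (equivalently Theorem~1.1) is consistent with how the paper organizes the results.
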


\begin{theorem}

Let $p$ be a prime number. Let $G$ be a finite $p$-group and let $|G'|>p^{n(n-1)/2}$ for some non-negative integer $n$. Then the set of elements of breadth at least $n$ cannot be covered by $p$ proper subgroups in $G$, one of which has index at least $p^2$ in $G$.

\end{theorem}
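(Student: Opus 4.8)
The plan is to follow the route used for Theorem 1.3, replacing its two external inputs, Lemma 2.3 and Theorem 2.1, by their natural $p$-subgroup analogues. First I would record a sharpened covering lemma: a non-cyclic finite $p$-group cannot be covered by $p$ or fewer proper subgroups, and if it is covered by exactly $p+1$ proper subgroups then every one of them has index $p$. This is the direct generalization of Lemma 2.3; passing to $V=G/\Phi(G)\cong\mathbb{F}_p^d$ (with $d\geq 2$ since $G$ is non-cyclic) it reduces to the statement that $\mathbb{F}_p^d$ needs at least $p+1$ proper subspaces to be covered and that a cover by exactly $p+1$ proper subspaces consists of hyperplanes. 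I would prove the latter by the standard point-count in the dual space, exactly as one computes the minimal covering number of a vector space.

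The main work is an analogue of Theorem 2.1 allowing up to $p$ covering subgroups: if $G$ is a finite $p$-group, $n\leq k+1$, the elements of breadth at least $n$ are covered by at most $p$ proper subgroups, and the elements of breadth at most $k$ generate $G$, then $\log_p|G'|\leq\frac{(n-1)(n-2)}{2}+k$. I would prove this by induction on $|G|$, imitating the proof of Theorem 2.1 almost verbatim, except for one point. As before one may assume the covering subgroups $H_1,\dots,H_m$ (with $m\leq p$) all have index $p$, and one wants an auxiliary index-$p$ subgroup $C\neq H_i$ on which the induction runs. The subtlety is that, restricting the cover to $C$, the traces $C\cap H_i$ together with $Y=\{c\in C:[c,g]\in C'\ \forall g\in G\}$ a priori give $m+1$ subgroups covering the elements of $C$ of breadth at least $n-1$, one too many when $m=p$. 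To avoid this I would choose $C$ so that two of the traces coincide: identifying index-$p$ subgroups with points of $\mathbb{P}(V^*)$, the subgroups $H_1,H_2$ correspond to distinct points $\phi_1,\phi_2$, and any $\psi$ on the line $\langle\phi_1,\phi_2\rangle$ satisfies $\ker\psi\cap\ker\phi_1=\ker\psi\cap\ker\phi_2$, i.e. $C\cap H_1=C\cap H_2$. Since that line has $p+1>m$ points, I can pick such a $\psi$ distinct from all $\phi_i$, so that $C$ differs from every $H_i$ while the restricted cover uses only $m$ subgroups of $C$ (namely $Y$ and the at most $m-1$ distinct traces). With the subgroup count thus kept at most $p$, the three cases $Y=C$, $[C:Y]=p$, $[C:Y]\geq p^2$ and the breadth bookkeeping go through exactly as in Theorem 2.1, with the sharpened covering lemma used in the last case (a cover of $C$ by at most $p+1$ proper subgroups forces $[C:Y]=p$, contradicting $[C:Y]\geq p^2$).

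Finally I would deduce Theorem 2.3 exactly as Theorem 1.3 was deduced from Theorem 2.1. Assuming $p$ proper subgroups $H_1,\dots,H_p$ cover the elements of breadth at least $n$ with, say, $|G:H_1|\geq p^2$, the set $G\setminus\bigcup_i H_i$ must generate $G$: otherwise it lies in a proper subgroup $H$, and $G=H\cup\bigcup_i H_i$ is a cover by at most $p+1$ proper subgroups, whence the sharpened covering lemma forces $|G:H_1|=p$, a contradiction. Every element of $G\setminus\bigcup_i H_i$ has breadth at most $n-1$, so taking $k=n-1$ the analogue of Theorem 2.1 gives $\log_p|G'|\leq\frac{(n-1)(n-2)}{2}+(n-1)=\frac{n(n-1)}{2}$, i.e. $|G'|\leq p^{n(n-1)/2}$, contrary to hypothesis. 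The step I expect to be the genuine obstacle is the one flagged above: arranging, uniformly for every $m\leq p$, that restriction to $C$ does not increase the number of covering subgroups, since it is precisely the borderline case $m=p$ that must be controlled and that is invisible in the original two-subgroup argument.
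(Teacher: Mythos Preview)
Your proposal is correct and follows exactly the route the paper indicates: the paper itself gives no detailed proof of Theorem~2.3, stating only that it is ``similar to the proofs of Theorems~1.2 and~1.3,'' and your write-up is a faithful and complete elaboration of that similarity, including the one genuinely new point---choosing the auxiliary index-$p$ subgroup $C$ on the pencil through $H_1,H_2$ so that two traces coincide and the inductive cover of $C$ still uses at most $p$ subgroups---together with the needed strengthening of Lemma~2.3 to $p+1$ subgroups.
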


\section{Acknowledgements}

I am grateful to Anton A. Klyachko and to Michael Vaughan-Lee for useful discussions and the improvement of an earlier version of the manuscript.

\end{document}